\newcommand*{\mailto}[1]{\href{mailto:#1}{\nolinkurl{#1}}}
\newcommand{\bbN}{{\mathbb{N}}}
\newcommand{\bbQ}{{\mathbb{Q}}}
\newcommand{\bbZ}{{\mathbb{Z}}}
\newcommand{\cA}{{\mathcal A}}
\newcommand{\cB}{{\mathcal B}}
\newcommand{\cH}{{\mathcal H}}
\newcommand{\beq}{\begin{equation}}
\newcommand{\enq}{\end{equation}}
\DeclareMathOperator{\tr}{tr}
\newcommand{\dott}{\,\cdot\,}
\newcommand{\no}{\notag}
\newcommand{\lb}{\label}
\newcommand{\hatt}{\widehat} 
\newcommand{\bi}{\bibitem}
\let\geq\geqslant
\let\leq\leqslant
\def\theequation{\@arabic\c@equation}
\numberwithin{equation}{section}
\newtheorem{theorem}{Theorem}[section]
\newtheorem{lemma}[theorem]{Lemma}
\theoremstyle{remark}
\begin{document}

\title[The Product Formula for Regularized Fredholm Determinants]{The Product Formula for Regularized \\Fredholm Determinants} 

\author[T.\ Britz et al.]{Thomas Britz}  
\address{School of Mathematics and Statistics, UNSW, Kensington, NSW 2052,
Australia} 
\email{\mailto{britz@unsw.edu.au}}
\urladdr{\url{https://research.unsw.edu.au/people/dr-thomas-britz}}

\author[]{Alan Carey}  
\address{Mathematical Sciences Institute, Australian National University, 
Kingsley St., Canberra, ACT 0200, Australia 
and School of Mathematics and Applied Statistics, University of Wollongong, NSW, Australia,  2522}  
\email{\mailto{acarey@maths.anu.edu.au}}
\urladdr{\url{http://maths.anu.edu.au/~acarey/}}

\author[]{Fritz Gesztesy}
\address{Department of Mathematics, 
Baylor University, Sid Richardson Bldg., 1410 S.\,4th Street,
Waco, TX 76706, USA}
\email{\mailto{Fritz\_Gesztesy@baylor.edu}}
\urladdr{\url{http://www.baylor.edu/math/index.php?id=935340}}

\author[]{Roger Nichols}
\address{Department of Mathematics, The University of Tennessee at Chattanooga, 
Dept. 6956, 615 McCallie Ave, Chattanooga, TN 37403, USA}
\email{\mailto{Roger-Nichols@utc.edu}}
\urladdr{\url{http://www.utc.edu/faculty/roger-nichols/index.php}}

\author[]{Fedor Sukochev}
\address{School of Mathematics and Statistics, UNSW, Kensington, NSW 2052,
Australia} 
\email{\mailto{f.sukochev@unsw.edu.au}}
\urladdr{\url{https://research.unsw.edu.au/people/scientia-professor-fedor-sukochev}}

\author[]{Dmitriy Zanin} 
\address{School of Mathematics and Statistics, UNSW, Kensington, NSW 2052,
Australia} 
\email{\mailto{d.zanin@unsw.edu.au}}
\urladdr{\url{https://research.unsw.edu.au/people/dr-dmitriy-zanin}}


\date{\today}
\thanks{A.L.C., G.L. and F.S. gratefully acknowledge the support of the Australian Research Council.} 
\thanks{To appear in {\it Proc. Amer. Math. Soc.}}
\@namedef{subjclassname@2020}{\textup{2020} Mathematics Subject Classification}
\subjclass[2020]{Primary: 47B10; Secondary: 47B02.}
\keywords{Trace ideals, regularized Fredholm determinants, determinant product formula.}

\begin{abstract} 
For trace class operators $A, B \in \cB_1(\cH)$ ($\cH$ a complex, separable Hilbert space), the product formula for Fredholm determinants holds in the familiar form
\[
{\det}_{\cH} ((I_{\cH} - A) (I_{\cH} - B)) = {\det}_{\cH} (I_{\cH} - A) {\det}_{\cH} (I_{\cH} - B). 
\]
When trace class operators are replaced by Hilbert--Schmidt operators $A, B \in \cB_2(\cH)$ and the Fredholm determinant ${\det}_{\cH}(I_{\cH} - A)$, $A \in \cB_1(\cH)$, by the 2nd regularized Fredholm determinant 
${\det}_{\cH,2}(I_{\cH} - A) =  {\det}_{\cH} ((I_{\cH} - A) \exp(A))$, $A \in \cB_2(\cH)$, the product formula must be replaced by 
\begin{align*} 
{\det}_{\cH,2} ((I_{\cH} - A) (I_{\cH} - B)) &= {\det}_{\cH,2} (I_{\cH} - A) {\det}_{\cH,2} (I_{\cH} - B)   \\
& \quad \times \exp(- \tr_{\cH}(AB)).     
\end{align*}
The product formula for the case of higher regularized Fredholm determinants ${\det}_{\cH,k}(I_{\cH} - A)$, 
$A \in \cB_k(\cH)$, $k \in \bbN$, $k \geq 2$, does not seem to be easily accessible and hence this note aims at filling this gap in the literature. 
\end{abstract}

\maketitle



\section{Introduction} \lb{s1} 

The purpose of this note is to prove a product formula for regularized (modified) Fredholm determinants extending the well-known Hilbert--Schmidt case.

To set the stage, we recall that if $A \in \cB_1(\cH)$ is a trace class operator on the complex, separable Hilbert space $\cH$, that is, the sequence of (necessarily nonnegative) eigenvalues $\lambda_j\big((A^*A)^{1/2}\big)$, $j \in \bbN_0 = \bbN \cup \{0\}$, of $|A| = (A^*A)^{1/2}$ (the singular values of $A$), ordered in nonincreasing magnitude and counted according to their multiplicity, lies in $\ell^1(\bbN_0)$, the Fredholm determinant 
${\det}_{\cH}(I_{\cH} - A)$ associated with $I_{\cH} - A$, $A \in \cB_1(\cH)$, is given by the absolutely convergent infinite product
\begin{equation}
{\det}_{\cH} (I_{\cH} - A) = \prod_{j \in J} [1 -  \lambda_j(A)],    \lb{1.1} 
\end{equation}  
where $\lambda_j(A)$, $j \in J$ (with $J \subseteq \bbN_0$ an approximate index set) are the (generally, complex) eigenvalues of $A$ ordered again with respect to nonincreasing absolute value and now counted according to their algebraic multiplicity. 

A celebrated property of ${\det}_{\cH}(I_{\cH} - \dott)$ that (like the analog of \eqref{1.1}) is shared with the case where $\cH$ is finite-dimensional, is the product formula 
\begin{equation}
{\det}_{\cH} ((I_{\cH} - A) (I_{\cH} - B)) = {\det}_{\cH} (I_{\cH} - A) {\det}_{\cH} (I_{\cH} - B), 
\quad A, B \in \cB_1(\cH)     \lb{1.2} 
\end{equation}
(see, e.g., \cite[pp.~162--163]{GK69}, \cite[Theorem~XIII.105\,$(a)$]{RS78}, \cite[Theorem~3.8]{Si77}, 
\cite[Theorem~3.5\,$(a)$]{Si05}, \cite[Theorem~3.4.10\,$(f)$]{Si15}, \cite[p.~44]{Ya92}).

When extending these considerations to operators $A \in \cB_p(\cH)$, with $ \cB_p(\cH)$, $p \in [1,\infty)$, the 
$\ell^p(\bbN_0)$-based trace ideals (i.e., the eigenvalues $\lambda_j\big((A^*A)^{1/2}\big)$, $j \in \bbN_0$, of 
$(A^*A)^{1/2}$ now lie in $\ell^p(\bbN_0)$, see, e.g., \cite[Sect.~III.7]{GK69}), the $k$th regularized
Fredholm determinant ${\det}_{\cH,k}(I_{\cH} - A)$, $k \in \bbN$, $ k \geq p$, associated with $I_{\cH} - A$, 
$A \in \cB_k(\cH)$, is given by 
\begin{align}
\begin{split} 
{\det}_{\cH,k} (I_{\cH} - A) &= \prod_{j \in J} \bigg([1 -  \lambda_j(A)] \exp\bigg(\sum_{\ell=1}^{k-1} \ell^{-1} 
\lambda_j(A)^{\ell}\bigg)\bigg)      \lb{1.3} \\
&= {\det}_{\cH} \bigg((I_{\cH} - A) \exp\bigg(\sum_{\ell=1}^{k-1} \ell^{-1} A^{\ell}\bigg)\bigg), \quad k \geq p
\end{split} 
\end{align}
(see, e.g., \cite[pp.~1106--1116]{DS88}, \cite[pp.~166--169]{GK69}, \cite{Si77}, \cite[pp.~75--76]{Si05}, 
\cite[pp.~187--191]{Si15}, \cite[p.~44]{Ya92}). In particular, the first line in \eqref{1.3} resembles the structure of canonical infinite product representations of entire functions according to Weierstrass, Hadamard, and Borel (see,
e.g., \cite[Vol. 2, Ch.~II.10]{Ma85}).  

We note that ${\det}_{\cH,k}(I_{\cH} - \dott)$ is continuous on $\cB_{\ell}(\cH)$ for $1 \leq \ell \leq k$, and 
\begin{equation}
{\det}_{\cH,k} (I_{\cH} - AB) = {\det}_{\cH,k} (I_{\cH} - BA),   \quad A, B \in \cB(\cH), \; AB, BA \in \cB_k(\cH)  
\lb{1.4} 
\end{equation}
(this extends to the case where $A$ maps between different Hilbert spaces $\cH_2$ and $\cH_1$ and $B$ from $\cH_1$ to 
$\cH_2$, etc.). 

The analog of the simple product formula \eqref{1.2} no longer holds for $k \geq 2$ and it is well-known in the special Hilbert--Schmidt case $k=2$ that \eqref{1.2} must be replaced by 
\begin{align}
& {\det}_{\cH,2} ((I_{\cH} - A) (I_{\cH} - B)) = {\det}_{\cH,2} (I_{\cH} - A) {\det}_{\cH,2} (I_{\cH} - B) 
\exp(- \tr_{\cH}(AB)),    \no \\
& \hspace*{9cm} A, B \in \cB_2(\cH)     \lb{1.5}
\end{align}
(see, e.g., \cite[p.~169]{GK69}, \cite[p.~76]{Si05}, \cite[p.~190]{Si15}, \cite[p.~44]{Ya92}). Recently, some of us needed the extension of \eqref{1.5} to general $k \in \bbN$, $k \geq 3$, in \cite{CGLNSZ20}, but were not able to find it in the literature; hence, this note aims at closing this gap. 

More precisely, we were interested in a product formula for ${\det}_{\cH,k} ((I_{\cH} - A) (I_{\cH} - B))$ for $A,B \in \cB_k(\cH)$ in terms of 
${\det}_{\cH,k} (I_{\cH} - A)$ and ${\det}_{\cH,k} (I_{\cH} - B)$, $k \in \bbN$, $k \geq 3$. As kindly pointed out to us by Rupert Frank, the particular case where $A$ is a finite rank operator, denoted by $F$, and $B \in \cB_k(\cH)$ was considered in 
\cite[Lemma~1.5.10]{Ha05} (see also, \cite[Proposition~4.8\,$(ii)$]{Ha16}), and the result
\begin{equation}
{\det}_{\cH,k} ((I_{\cH} - F) (I_{\cH} - B)) = {\det}_{\cH} (I_{\cH} - F) {\det}_{\cH,k} (I_{\cH} - B)    
\exp{(\tr_{\cH}(p_n(F,B)))},     \lb{1.6}
\end{equation}
with $p_n(\dott,\dott)$ a polynomial in two variables and of finite rank, was derived. An extension of this formula to three factors, that is, 
\begin{align}
{\det}_{\cH,k} ((I_{\cH} - A)(I_{\cH} - F) (I_{\cH} - B)) &= {\det}_{\cH} (I_{\cH} - F) 
{\det}_{\cH,k} ((I_{\cH} - A)(I_{\cH} - B))    \no \\
& \quad \times \exp{(\tr_{\cH}(p_n(A,F,B)))},     \lb{1.7} 
\end{align}
with $p_n(\dott,\dott,\dott)$ a polynomial in three variables and of finite rank, was derived in \cite[Lemma~C.1]{Fr17}.

The result we have in mind is somewhat different from \eqref{1.6} in that we are interested in a quantitative version of the following fact:  

\begin{theorem} \lb{tD.1} 
Let $k \in \bbN$, and suppose $A, B \in \cB_k(\cH)$. Then
\begin{align}
{\det}_{\cH,k} ((I_{\cH} - A)(I_{\cH} - B)) = {\det}_{\cH,k} (I_{\cH} - A) {\det}_{\cH,k} (I_{\cH} - B) 
\exp(\tr_{\cH}(X_k(A,B))),     \lb{D.1a} 
\end{align}
where $X_k(\dott,\dott) \in \cB_1(\cH)$ is of the form
\begin{align} 
\begin{split} 
X_1(A,B) &= 0,  \\
X_k(A,B) &= \sum_{j_1,\dots,j_{2k-2} = 0}^{k-1} c_{j_1,\dots,j_{2k-2}} 
C_1^{j_1} \cdots C_{2k-2}^{j_{2k-2}}, \quad k \geq 2,
\end{split}
\end{align}
with 
\begin{align}
\begin{split} 
& c_{j_1,\dots,j_{2k-2}} \in \bbQ,   \\
& C_{\ell} = A \text{ or } B, \quad 1 \leq \ell \leq 2k-2,    \\
& k \leq \sum_{\ell=1}^{2k-2} j_{\ell} \leq 2k - 2, \quad k \geq 2.  
\end{split} 
\end{align}
\end{theorem}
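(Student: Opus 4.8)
The plan is to reduce everything to the ordinary Fredholm determinant in the finite rank case, where all powers of $A$ and $B$ are trace class and the product formula \eqref{1.2} together with $\det_{\cH}(\exp(M)) = \exp(\tr_{\cH}(M))$ applies, and then to extend to arbitrary $A, B \in \cB_k(\cH)$ by continuity. Throughout set $C = A + B - AB$, so that $I_{\cH} - C = (I_{\cH} - A)(I_{\cH} - B)$. First I would assume $A$ and $B$ of finite rank. Then \eqref{1.3} and \eqref{1.2} give, for each of $A$, $B$, $C$,
\[
{\det}_{\cH,k}(I_{\cH} - A) = {\det}_{\cH}(I_{\cH} - A)\exp\bigg(\sum_{\ell=1}^{k-1}\ell^{-1}\tr_{\cH}(A^{\ell})\bigg),
\]
since $\exp(\sum_{\ell=1}^{k-1}\ell^{-1}A^{\ell}) = I_{\cH} + (\text{finite rank})$. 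Dividing the three relations and cancelling the ordinary determinants via \eqref{1.2} applied to $(I_{\cH} - A)(I_{\cH} - B) = I_{\cH} - C$, I am left with
\[
\frac{{\det}_{\cH,k}((I_{\cH} - A)(I_{\cH} - B))}{{\det}_{\cH,k}(I_{\cH} - A)\,{\det}_{\cH,k}(I_{\cH} - B)} = \exp\bigg(\sum_{\ell=1}^{k-1}\ell^{-1}\tr_{\cH}\big(C^{\ell} - A^{\ell} - B^{\ell}\big)\bigg),
\]
which identifies a candidate for $\tr_{\cH}(X_k(A,B))$ as the finite sum on the right (empty, hence $X_1 = 0$, when $k=1$).

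Next I would study the homogeneity of that finite sum, viewed as a polynomial in the noncommuting variables $A, B$. Expanding $C^{\ell} = (A + B - AB)^{\ell}$ into words, every monomial of $C^{\ell} - A^{\ell} - B^{\ell}$ has total degree between $\ell$ and $2\ell$, so the whole expression is a $\bbQ$-linear combination of traces of words of total degree at most $2(k-1) = 2k-2$; this is the upper bound $\sum_{\ell} j_{\ell} \leq 2k-2$. For the lower bound I would rescale $A \mapsto zA$, $B \mapsto zB$ and set $\Phi(z) = \sum_{\ell=1}^{k-1}\ell^{-1}\tr_{\cH}((zA + zB - z^2 AB)^{\ell} - (zA)^{\ell} - (zB)^{\ell})$, a polynomial in $z$ whose coefficient of $z^d$ is exactly the total-degree-$d$ part of the candidate $\tr_{\cH}(X_k(A,B))$. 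On the other hand $\exp(\Phi(z))$ equals the determinant ratio above with $A, B$ replaced by $zA, zB$; and since \eqref{1.3} gives $\log{\det}_{\cH,k}(I_{\cH} - zA) = -\sum_{\ell \geq k}\ell^{-1} z^{\ell}\tr_{\cH}(A^{\ell}) = O(z^k)$ (and likewise for $B$ and for $C(z) = zA + zB - z^2 AB = O(z)$, whose $k$th and higher powers first enter at order $z^k$), the ratio equals $1 + O(z^k)$. Hence $\Phi(z) = O(z^k)$, forcing the coefficients of $z^0, \dots, z^{k-1}$ to vanish, i.e.\ all total-degree-$d$ contributions with $d \leq k-1$ cancel. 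This yields the lower bound $\sum_{\ell} j_{\ell} \geq k$.

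Having pinned down the admissible degrees, I would define $X_k(A,B)$ to be the sum of those monomials of total degree $k \leq d \leq 2k-2$ occurring in $\sum_{\ell=1}^{k-1}\ell^{-1}(C^{\ell} - A^{\ell} - B^{\ell})$, with their rational coefficients; by the cancellation just established this operator has the same trace as the full expression, so \eqref{D.1a} holds for finite rank $A, B$ with $X_k$ of exactly the stated form. Because each surviving monomial is a product of at least $k$ factors from $\cB_k(\cH)$, Hölder's inequality for trace ideals gives $X_k(A,B) \in \cB_1(\cH)$ and shows $(A,B) \mapsto X_k(A,B)$ is continuous from $\cB_k(\cH) \times \cB_k(\cH)$ into $\cB_1(\cH)$; hence $\tr_{\cH}(X_k(A,B))$ is continuous in $(A,B)$. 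Since ${\det}_{\cH,k}(I_{\cH} - \dott)$ is continuous on $\cB_k(\cH)$ and finite rank operators are dense there, both sides of \eqref{D.1a} are continuous on $\cB_k(\cH) \times \cB_k(\cH)$, and the identity extends from the dense finite rank set to all of $\cB_k(\cH) \times \cB_k(\cH)$. The main obstacle I anticipate is precisely the degree bookkeeping of the second step: one must ensure that the low-degree words cancel at the level of traces and not merely formally as operators (where, having degree below $k$, they need not even be trace class), and it is the vanishing $\Phi(z) = O(z^k)$ that supplies this; the remaining check that the surviving coefficients are rational and of the claimed shape is then routine.
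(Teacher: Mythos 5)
Your proof is correct, and it reaches the theorem by a genuinely different route at the decisive step. Both arguments share the same skeleton: work first on a dense class of operators, convert ${\det}_{\cH,k}$ into ${\det}_{\cH}$ times $\exp\big({\tr}_{\cH}\big(\sum_{\ell=1}^{k-1}\ell^{-1}T^{\ell}\big)\big)$, invoke the classical product formula \eqref{1.2} to reduce everything to the correction term $\sum_{\ell=1}^{k-1}\ell^{-1}{\tr}_{\cH}\big((A+B-AB)^{\ell}-A^{\ell}-B^{\ell}\big)$, split this by word length, discard the words of length $<k$, and extend by continuity and H\"older. The difference is how the discarding is justified. The paper proves a purely algebraic identity in the free algebra ${\rm Pol_2}$: the low-degree part $y_k-\sum_{\ell=1}^{k-1}\ell^{-1}(A^{\ell}+B^{\ell})$ lies in the commutator subspace $[{\rm Pol_2},{\rm Pol_2}]$ (Lemmas \ref{lD.2}--\ref{lD.5}), via an explicit computation of the coefficient of each word and a cyclic-shift/binomial cancellation. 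You instead rescale $A\mapsto zA$, $B\mapsto zB$ and observe that the logarithm of the determinant ratio is simultaneously a polynomial $\Phi(z)$ whose $z^d$-coefficient is the total trace of the degree-$d$ words, and an analytic function that is $O(z^k)$ because $\log{\det}_{\cH,k}(I_{\cH}-T)=-\sum_{\ell\geq k}\ell^{-1}z^{\ell}{\tr}_{\cH}(T^{\ell})$ for small $z$; hence the coefficients of $z^0,\dots,z^{k-1}$ vanish. Your route is shorter and dispenses with the combinatorics entirely, at the price of establishing only that the \emph{traces} of the low-degree contributions cancel (which is all the theorem needs), whereas the paper's commutator-subspace lemma is a stronger, trace-independent statement that also yields closed-form rational coefficients for each word --- the ``quantitative'' content the authors were after. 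The one delicate point you correctly flag, namely that the cancellation must be read at the level of traces because the short words need not be trace class for general $A,B\in\cB_k(\cH)$, is exactly the point both proofs are organized around; your $\Phi(z)=O(z^k)$ device handles it cleanly, as does the paper's restriction to $\cB_1(\cH)$ (rather than finite rank) before the density argument.
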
 

Explicitly, one obtains:
\begin{align}
X_1(A,B) &= 0,   \no \\
X_2(A,B) &= - AB,    \no \\
X_3(A,B) &= 2^{-1} \big[(AB)^2 - AB(A+B) - (A+B)AB\big],     \\
X_4(A,B) &= 2^{-1} (AB)^2 - 3^{-1} \big[AB(A+B)^2+(A+B)^2AB+(A+B)AB(A+B)\big]  \no \\
& \quad + 3^{-1} \big[(AB)^2(A+B)+(A+B)(AB)^2+AB(A+B)AB\big]     \no \\ 
& \quad - 3^{-1} (AB)^3, \no \\
& \hspace*{-9mm} \text{etc.}    \no 
\end{align}

When taking traces (what is actually needed in \eqref{D.1a}), this simplifies to
\begin{align}
\begin{split}
& \tr_{\cH} (X_1(A,B)) = 0,   \\
& \tr_{\cH}(X_2(A,B)) = - \tr_{\cH}(AB),    \\
& \tr_{\cH}(X_3(A,B)) = - \tr_{\cH}\big(ABA + BAB - 2^{-1} (AB)^2\big),     \\
& \tr_{\cH}(X_4(A,B)) = - \tr_{\cH}\big(A^3 B + A^2 B^2 + A B^3 + 2^{-1} (AB)^2    \\
& \hspace*{3.8cm} - (AB)^2A - B(AB)^2 + 3^{-1} (AB)^3\big),    \\
& \quad \text{etc.}
\end{split} 
\end{align}

We present the proof of a quantitative version of Theorem \ref{tD.1} in two parts. In the next section we prove an algebraic result, Lemma \ref{lD.5}, that is the key to the analytic part of the argument
appearing in the final section on regularized determinants.

\section{The Commutator Subspace in the Algebra of \\ Noncommutative Polynomials} \lb{s2}

To prove a quantitative version of Theorem \ref{tD.1} and hence derive a formula for $X_k(A,B)$, we first need to recall some facts on the commutator subspace of an algebra of noncommutative polynomials. 

Let ${\rm Pol_2}$ be the free polynomial algebra in $2$ (noncommuting) variables, $A$ and $B$. Let $W$ be the set of noncommutative monomials (words in the alphabet $\{A,B\}$). (We recall that the set $W$ is a semigroup with respect to concatenation, $1$ is the neutral element of this semigroup, that is, $1$ is an empty word in this alphabet.) Every $x\in{\rm Pol_2}$ can be written as a sum
\begin{equation}
x=\sum_{w\in W}\hatt{x}(w)w.
\end{equation} 
Here the coefficients $\hatt{x}(w)$ vanish for all but finitely many $w\in W$. 

Let $[{\rm Pol_2},{\rm Pol_2}]$ be the commutator subspace of ${\rm Pol_2}$, that is, the linear span of commutators $[x_1,x_2]$, $x_1,x_2\in{\rm Pol_2}$. 

\begin{lemma} \lb{lD.2} 
One has $x\in[{\rm Pol_2},{\rm Pol_2}]$ provided that
\begin{equation} 
\sum_{m=1}^{L(w)}\hatt{x}\bigl(\sigma^m(w)\bigr)=0,\quad w\in W.
\end{equation} 
Here, 
$L(w)$ is the length of each word $w = w_1 w_2\cdots w_{L(w)}$,
$\sigma$ is the cyclic shift given by $\sigma(w)=w_2\cdots w_{L(w)}w_1$.  
\end{lemma}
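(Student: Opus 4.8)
The plan is to work in the quotient $ {\rm Pol_2}/[{\rm Pol_2},{\rm Pol_2}]$ and to exploit the fact that, modulo commutators, every word agrees with all of its cyclic shifts. The single identity driving the whole argument is that for any word $w=w_1 w_2\cdots w_{L(w)}$ of length $L(w)\geq 1$,
\begin{equation*}
w-\sigma(w)=w_1\bigl(w_2\cdots w_{L(w)}\bigr)-\bigl(w_2\cdots w_{L(w)}\bigr)w_1=\bigl[w_1,\,w_2\cdots w_{L(w)}\bigr]\in[{\rm Pol_2},{\rm Pol_2}].
\end{equation*}
Iterating and telescoping, $w-\sigma^m(w)=\sum_{i=0}^{m-1}\bigl(\sigma^i(w)-\sigma^{i+1}(w)\bigr)\in[{\rm Pol_2},{\rm Pol_2}]$ for every $m$ (each summand is a commutator by the identity above applied to the word $\sigma^i(w)$), so all words in a single cyclic orbit are congruent to one another modulo the commutator subspace.

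Next I would translate the hypothesis on the coefficients into a statement about cyclic orbits. For a word $w$ of length $n=L(w)$ with minimal period $p$ (so $p\mid n$), the orbit $\{\sigma^m(w):m\geq 1\}$ consists of exactly $p$ distinct words, each visited $n/p$ times as $m$ runs through $1,\dots,n$. Hence
\begin{equation*}
\sum_{m=1}^{L(w)}\hatt{x}\bigl(\sigma^m(w)\bigr)=\frac{n}{p}\sum_{v\in\cO(w)}\hatt{x}(v),
\end{equation*}
where $\cO(w)$ denotes the set of \emph{distinct} cyclic shifts of $w$. Since $n/p\geq 1$, the stated hypothesis is equivalent to the assertion that the coefficient sum $\sum_{v\in\cO(w)}\hatt{x}(v)$ vanishes over every cyclic orbit.

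Finally I would reassemble $x$. Choosing one representative $r_{\cO}$ from each cyclic orbit $\cO$ and using the first-step congruence to replace each word by its representative, one obtains
\begin{equation*}
x=\sum_{w\in W}\hatt{x}(w)\,w\equiv\sum_{\cO}\Bigl(\sum_{v\in\cO}\hatt{x}(v)\Bigr)r_{\cO}\pmod{[{\rm Pol_2},{\rm Pol_2}]},
\end{equation*}
and every coefficient on the right-hand side vanishes by the second step, whence $x\in[{\rm Pol_2},{\rm Pol_2}]$.

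I expect the genuinely delicate points to be matters of bookkeeping rather than of concept. The first is the periodicity count relating $\sum_{m=1}^{L(w)}$ to the sum over distinct shifts, where one must avoid double counting for words of nontrivial period; this is exactly what the factor $n/p$ records. The second is the degenerate orbit of length $0$, namely the empty word $1$, for which the hypothesis reduces to an empty (hence vacuous) sum; since no nonzero commutator carries a constant term, the constant coefficient $\hatt{x}(1)$ is not controlled by the stated condition, so strictly one also needs $\hatt{x}(1)=0$. This costs nothing in the intended application, where the polynomials $X_k(A,B)$ have neither constant nor degree-one part; note in passing that the length-one orbits $\{A\}$ and $\{B\}$ \emph{are} correctly forced to have vanishing coefficients by the case $L(w)=1$, since $\sigma$ fixes a single letter.
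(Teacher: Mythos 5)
Your proof is correct and follows essentially the same route as the paper's: both reduce each word to a cyclic-shift representative modulo $[{\rm Pol_2},{\rm Pol_2}]$ (via $w-\sigma(w)=[w_1,\,w_2\cdots w_{L(w)}]$) and observe that the surviving coefficients are exactly the hypothesized vanishing sums, the paper using the averaging factor $L(w)^{-1}$ over all $L(w)$ shifts where you group by distinct orbits with the period factor $n/p$. Your remark that the hypothesis is vacuous at $w=1$, so that $\hatt{x}(1)=0$ must be assumed separately, is a fair observation --- the paper simply asserts ``by hypothesis, $\hatt{x}(1)=0$'' --- and it is harmless in the intended application.
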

\begin{proof} One notes that 
\begin{equation} 
x=\sum_{w\in W}\hatt{x}(w)w=\hatt{x}(1)+\sum_{w\neq1} L(w)^{-1}
\sum_{m=1}^{L(w)}\hatt{x}\bigl(\sigma^m(w)\bigr)\sigma^m(w).
\end{equation} 
Obviously, $(\sigma^m(w)-w)\in[{\rm Pol_2},{\rm Pol_2}]$ for each positive integer $m$ and thus,
\begin{equation} 
x\in \bigg(\hatt{x}(1)+\sum_{w\neq1} L(w)^{-1}\sum_{m=1}^{L(w)}\hatt{x}\bigl(\sigma^m(w)\bigr)w+[{\rm Pol_2},{\rm Pol_2}]\bigg).
\end{equation} 
By hypothesis, $\hatt{x}(1)=0$ and
\begin{equation} 
\sum_{m=1}^{L(w)}\hatt{x}\bigl(\sigma^m(w)\bigr)=0,\quad 1\neq w\in W,
\end{equation} 
completing the proof.
\end{proof}

Next, we need some notation. 
Let $k_1,k_2 \in \bbN_0 = \bbN \cup \{0\}$, and set
\begin{equation} 
z_{k_1,k_2}= \begin{cases}
0, & k_1=k_2=0, \\
k_1^{-1} A^{k_1}, & k_1\in \bbN, \, k_2 = 0, \\
k_2^{-1} B^{k_2}, & k_1 = 0, \, k_2 \in \bbN, \\
\sum_{j=1}^{k_1+k_2} j^{-1} 
\sum_{\substack{\pi\in S_j, \, |\pi|=3\\|\pi_1|+|\pi_3|=k_1\\|\pi_2|+|\pi_3|=k_2}}(-1)^{|\pi_3|}z_{\pi}, & k_1,k_2 \in \bbN. 
\end{cases}     \lb{D.z} 
\end{equation} 
Here, $S_j$ is the set of all partitions of the set $\{1,\cdots,j\}$, $1 \leq j \leq k_1 + k_2$. 
(The symbol $|\dott|$ abbreviating the cardinality of a subset of $\bbZ$.) 
The condition $|\pi|=3$ means that $\pi$ breaks the set $\{1,\cdots,j\}$ into exactly $3$ pieces denoted by $\pi_1$, $\pi_2$, and $\pi_3$ (some of them can be empty). The element $z_{\pi}$ denotes the product
\begin{equation}
z_{\pi}=\prod_{m=1}^jz_{m,\pi},\quad z_{m,\pi}=
\begin{cases}
A,& m\in\pi_1,\\
B,& m\in\pi_2,\\
AB,& m\in\pi_3.
\end{cases}
\end{equation} 
Finally, let $W_{k_1,k_2}$ be the collection of all words with $k_1$ letters $A$ and $k_2$ letters $B$. 

Using this notation we now establish a combinatorial fact. 

\begin{lemma} \lb{lD.3} 
Let $k_1,k_2 \in \bbN$. Then  
\begin{equation} 
z_{k_1,k_2}=\sum_{w\in W_{k_1,k_2}}
\Bigg(\sum_{\ell=0}^{n(w)}\frac{(-1)^{\ell}}{k_1+k_2-\ell}\binom{n(w)}{\ell}\Bigg)w, 
\end{equation} 
where 
\begin{equation} 
n(w)=|S(w)|,\quad S(w)= \{1\leq \ell \leq L(w)-1 \, | \,  w_{\ell}=A,\, w_{\ell +1}=B\}.
\end{equation} 
\end{lemma}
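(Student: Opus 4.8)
The plan is to compute the coefficient of an arbitrary word $w \in W_{k_1,k_2}$ in the expansion of $z_{k_1,k_2}$ and to verify that it equals the parenthesized factor in the statement. The starting point is the observation that each summand $z_\pi$ in \eqref{D.z} is itself a single word lying in $W_{k_1,k_2}$: reading the positions $m = 1, \dots, j$ in increasing order, the factor $z_{m,\pi}$ contributes the block $A$, $B$, or $AB$ according to whether $m$ lies in $\pi_1$, $\pi_2$, or $\pi_3$, and the constraints $|\pi_1|+|\pi_3| = k_1$, $|\pi_2|+|\pi_3| = k_2$ guarantee exactly $k_1$ letters $A$ and $k_2$ letters $B$. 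Thus $\pi$ (equivalently, an assignment of one of the three types $A$, $B$, $AB$ to each of the $j$ positions, with the stated marginals) encodes a segmentation of a word $w$ into $j$ consecutive blocks drawn from $\{A, B, AB\}$, of which exactly $p := |\pi_3|$ are $AB$-blocks.

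First I would fix $w \in W_{k_1,k_2}$ and set up a bijection between the partitions $\pi$ with $z_\pi = w$ and the ways of segmenting $w$ into blocks from $\{A,B,AB\}$. Since an $AB$-block occupies two consecutive letters $w_\ell w_{\ell+1} = AB$, it can begin only at a position $\ell \in S(w)$; conversely, once the set of $AB$-blocks is chosen, the remaining letters are forced to be single-letter blocks, so the segmentation, and hence $\pi$, is determined. The crucial point is that \emph{every} subset $T \subseteq S(w)$ is admissible, i.e.\ the corresponding $AB$-blocks never overlap: if $\ell, \ell' \in S(w)$ with $\ell < \ell'$, then $w_{\ell+1} = B$ forces $\ell + 1 \notin S(w)$, whence $\ell' \geq \ell + 2$ and the blocks $\{\ell, \ell+1\}$, $\{\ell', \ell'+1\}$ are disjoint. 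Consequently the partitions $\pi$ with $z_\pi = w$ and $|\pi_3| = p$ are in bijection with the $p$-element subsets of $S(w)$, of which there are $\binom{n(w)}{p}$.

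Next I would collect the contributions. For a segmentation of $w$ with $p$ blocks of type $AB$ the total number of blocks is $j = k_1 + k_2 - p$ (each $AB$-block merges two letters into one block), and the factor attached in \eqref{D.z} is $(-1)^{|\pi_3|} j^{-1} = (-1)^p (k_1+k_2-p)^{-1}$. Summing over all $\pi$ with $z_\pi = w$, the coefficient of $w$ in $z_{k_1,k_2}$ is
\begin{equation}
\sum_{p=0}^{n(w)} \binom{n(w)}{p} \frac{(-1)^p}{k_1 + k_2 - p},
\end{equation}
which is precisely the parenthesized factor in the statement (with $\ell = p$); here the range $1 \leq j \leq k_1+k_2$ in \eqref{D.z} corresponds to $0 \leq p \leq k_1+k_2-1$, but $\binom{n(w)}{p}$ vanishes for $p > n(w)$, so only $0 \leq p \leq n(w)$ survives. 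Summing over $w \in W_{k_1,k_2}$ then yields the claimed identity.

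I expect the main obstacle to be the clean combinatorial bookkeeping of the second step, namely verifying that $\pi \mapsto z_\pi$, restricted to the colorings with $|\pi_3| = p$, is a bijection onto the $p$-subsets of $S(w)$. The only nontrivial ingredient is the non-adjacency of the positions in $S(w)$, which is what makes every subset admissible; once that is in hand, the count $\binom{n(w)}{p}$ and the reindexing from the sum over $j$ to the sum over $p$ via $j = k_1+k_2-p$ are routine.
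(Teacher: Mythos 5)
Your proposal is correct and follows essentially the same route as the paper: identify each partition $\pi$ with $z_\pi = w$ with the subset of $S(w)$ marking which $AB$-subwords come from $\pi_3$, count these as $\binom{n(w)}{k_1+k_2-j}$, and reindex the sum via $\ell = k_1+k_2-j$. Your explicit verification that every subset of $S(w)$ is admissible (via non-adjacency of positions in $S(w)$) is a welcome detail the paper leaves implicit, but the argument is otherwise identical.
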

\begin{proof} For each $j\in \{1, \ldots, k_1 + k_2\}$, let
\begin{align} 
\Pi_j &= \{ \pi\in   S_j \, | \, |\pi| = 3,\, \, |\pi_1|+|\pi_3| = k_1, \, |\pi_2|+|\pi_3| = k_2\},    \\
\Pi_{j,w} &=\{ \pi\in \Pi_j \, | \, z_\pi = w\}, \quad w\in W_{k_1,k_2}.
\end{align} 
One observes that $|\pi_3|\leq n(w)\leq \min\{k_1,k_2\}$ and that
\begin{equation} 
j = |\pi_1|+|\pi_2|+|\pi_3| = k_1+k_2-|\pi_3|.
\end{equation} 
For any partition $\pi\in\Pi_{j,w}$, let $I\subseteq S(w)$ indicate which subwords $AB$ in $w$ arise from elements in $\pi_3$.  Then $|I|=|\pi_3|=k_1+k_2-j$.  Therefore, each partition in $\pi\in\Pi_{j,w}$ is determined by a unique choice of $I$ and each such choice of $I$ determines the choice of $\pi$ uniquely. This implies that 
\begin{equation} 
|\Pi_{j,w}| = \binom{n(w)}{k_1+k_2-j}.
\end{equation} 
Thus,
\begin{align} 
z_{k_1,k_2} &=\sum_{w\in W_{k_1,k_2}}\sum_{j=1}^{k_1+k_2} j^{-1} \sum_{\pi\in\Pi_{j,w}} (-1)^{|\pi_3|} w 
\no \\
&=\sum_{w\in W_{k_1,k_2}} \sum_{j=1}^{k_1+k_2} (-1)^{k_1+k_2-j} j^{-1} |\Pi_{j,w}|w    \no \\ 
&=\sum_{w\in W_{k_1,k_2}} \sum_{j=1}^{k_1+k_2} (-1)^{k_1+k_2-j} j^{-1} \binom{n(w)}{k_1+k_2-j}w.
\end{align}
Taking into account that
\begin{equation} 
\binom{n(w)}{k_1+k_2-j}=0,\quad k_1+k_2-j\notin\{0,\cdots,n(w)\},
\end{equation} 
it follows that
\begin{align} 
z_{k_1,k_2} &=\sum_{w\in W_{k_1,k_2}} 
\sum_{j=k_1+k_2-n(w)}^{k_1+k_2} (-1)^{k_1+k_2-j} j^{-1} \binom{n(w)}{k_1+k_2-j}w    \no \\
&=\sum_{w\in W_{k_1,k_2}} 
\Bigg(\sum_{\ell=0}^{n(w)}\frac{(-1)^{\ell}}{k_1+k_2-\ell}\binom{n(w)}{\ell}\Bigg)w.
\end{align} 
\end{proof}

We can now prove the main fact about the commutator  subspace of ${\rm Pol_2}$ needed later on.

\begin{lemma} \lb{lD.4} 
For every $k_1,k_2 \in \bbN$, $z_{k_1,k_2}\in[{\rm Pol_2},{\rm Pol_2}]$.  
\end{lemma}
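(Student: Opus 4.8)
The plan is to deduce the claim from the coefficient criterion of Lemma \ref{lD.2}. Every word occurring in $z_{k_1,k_2}$ lies in $W_{k_1,k_2}$ and hence has the common length $L := k_1+k_2$, so it suffices to verify that for each $w\in W_{k_1,k_2}$ the cyclic sum of coefficients $\sum_{m=1}^{L}\hatt{z_{k_1,k_2}}\bigl(\sigma^m(w)\bigr)$ vanishes. By Lemma \ref{lD.3} the coefficient of $w$ depends on $w$ only through the integer $n(w)$, namely $\hatt{z_{k_1,k_2}}(w)=f(n(w))$ with
\[
f(n)=\sum_{\ell=0}^{n}\frac{(-1)^{\ell}}{L-\ell}\binom{n}{\ell}.
\]
Thus the entire statement reduces to a one-variable identity for $f$ summed along a cyclic orbit.

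First I would put $f$ in closed form. Writing $1/(L-\ell)=\int_0^1 t^{L-\ell-1}\,dt$ (legitimate since $\ell\le n\le\min\{k_1,k_2\}<L$) and applying the binomial theorem to $\sum_\ell\binom{n}{\ell}(-1)^\ell t^{n-\ell}=(t-1)^n$ yields a standard Beta integral,
\[
f(n)=\int_0^1 t^{L-1-n}(t-1)^n\,dt=(-1)^n\frac{(L-n-1)!\,n!}{L!}.
\]

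Next I would carry out the combinatorial heart of the argument: tracking how $n$ behaves along a cyclic orbit. Let $N(w)$ denote the \emph{cyclic} number of $AB$-adjacencies of $w$, i.e.\ the number of $i\in\{1,\dots,L\}$ with $w_i=A$, $w_{i+1}=B$, indices read modulo $L$; this quantity is manifestly invariant under $\sigma$. Since the linear count $n(w')$ differs from $N(w')$ exactly by whether the ``seam'' pair $(w'_L,w'_1)$ equals $AB$, and the seam of $\sigma^m(w)$ is precisely $(w_m,w_{m+1})$, one finds that as $m$ runs through $\{1,\dots,L\}$ the value $n(\sigma^m(w))$ equals $N(w)-1$ for exactly $N(w)$ of the indices $m$ (those indexing a cyclic $AB$-adjacency) and equals $N(w)$ for the remaining $L-N(w)$ indices. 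Consequently,
\[
\sum_{m=1}^{L}f\bigl(n(\sigma^m(w))\bigr)=N(w)\,f\bigl(N(w)-1\bigr)+\bigl(L-N(w)\bigr)\,f\bigl(N(w)\bigr).
\]

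Finally, I would note that because $k_1,k_2\ge 1$ the word $w$ contains both letters, so a cyclic walk must traverse at least one $A\to B$ transition; hence $N(w)\ge 1$ and $f(N(w)-1)$ is never evaluated at a negative argument, while $N(w)\le\min\{k_1,k_2\}\le L-1$ keeps $f(N(w))$ well defined. Substituting the closed form and using $N\cdot(N-1)!=N!$ and $(L-N)(L-N-1)!=(L-N)!$, the two terms become $(-1)^{N-1}$ and $(-1)^{N}$ times the common factor $(L-N)!\,N!/L!$, so they cancel and the cyclic sum is $0$; Lemma \ref{lD.2} then gives $z_{k_1,k_2}\in[{\rm Pol_2},{\rm Pol_2}]$. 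The step demanding genuine care is the orbit bookkeeping in the combinatorial claim—correctly accounting for the seam and for possible periodicity of $w$, so that the tally is taken with multiplicity matching the $\sum_{m=1}^{L}$ of Lemma \ref{lD.2}; once that is in place the closed-form cancellation is automatic.
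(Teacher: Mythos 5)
Your proof is correct and follows essentially the same route as the paper: reduce to the cyclic-sum criterion of Lemma \ref{lD.2}, use the coefficient formula of Lemma \ref{lD.3}, show that along the orbit $\{\sigma^m(w)\}_{m=1}^{L}$ the statistic $n$ takes the value $N-1$ exactly $N$ times and $N$ exactly $L-N$ times, and verify $N f(N-1)+(L-N)f(N)=0$. The only (harmless) differences are cosmetic: you organize the orbit bookkeeping via the cyclically invariant adjacency count $N(w)$ rather than normalizing $w$ to start with $AB$, and you establish the final cancellation from the Beta-integral closed form $f(n)=(-1)^n(L-n-1)!\,n!/L!$ instead of the paper's binomial identity $n\binom{n-1}{\ell}+(L-n)\binom{n}{\ell}=(L-\ell)\binom{n}{\ell}$.
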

\begin{proof} 
Let $w$ be any element in $W_{k_1,k_2}$ and let $m$ be any positive integer. 
If $\sigma^m(w)$ starts with the subword $AB$, 
then $\sigma^{m+1}(w)$ has the form $B\cdots A$ and therefore has one fewer subwords $AB$ than $\sigma^m(w)$; 
that is, $n\bigl(\sigma^{m+1}(w)\bigr) = n\bigl(\sigma^m(w)\bigr)-1$.
If, however, $\sigma^m(w)$ does not start with the subword $AB$, 
then the $AB$ subwords of $\sigma^{m+1}(w)$ are precisely the $AB$ subwords of $\sigma^m(w)$ each shifted once; 
hence, $n\bigl(\sigma^{m+1}(w)\bigr) = n\bigl(\sigma^m(w)\bigr)$.

Now, to calculate $\sum_{m=1}^{L(w)}\widehat{z_{k_1,k_2}}\bigl(\sigma^m(w)\bigr)$, 
one may assume, by applying cyclic shifts, that $w$ starts with $AB$. 
Then there are $n(w)$ shifted words $\sigma^m(w)$ which start with the subword $AB$, 
and it follows that $n(w)$ of the numbers $\{n\bigl(\sigma^m(w)\bigr)\::\: 1\leq m\leq L(w)\}$ equal $n(w)-1$ 
and that the remaining $L(w)-n(w) = k_1+k_2-n(w)$ numbers equal $n(w)$. 
Lemma~\ref{lD.3} therefore implies that 
\begin{align} 
  \sum_{m=1}^{L(w)}\widehat{z_{k_1,k_2}}\bigl(\sigma^m(w)\bigr)
&=\sum_{m=1}^{L(w)}\Bigg(\sum_{\ell=0}^{n(\sigma^m(w))}\frac{(-1)^{\ell}}{k_1+k_2-\ell}\binom{n\bigl(\sigma^m(w)\bigr)}{\ell}\Bigg)   \no \\
&=n(w) \Bigg(\sum_{\ell=0}^{n(w)-1}\frac{(-1)^{\ell}}{k_1+k_2-\ell}\binom{n(w)-1}{\ell}\Bigg)   \no \\
& \quad +(k_1+k_2-n(w)) \Bigg(\sum_{\ell=0}^{n(w)}\frac{(-1)^{\ell}}{k_1+k_2-\ell}\binom{n(w)}{\ell}\Bigg).
\end{align} 
Since
\begin{equation} 
\binom{n(w)-1}{n(w)}=0,
\end{equation} 
it follows that
\begin{align} 
\sum_{m=1}^{L(w)}\widehat{z_{k_1,k_2}}\bigl(\sigma^m(w)\bigr)
&=n(w) \Bigg(\sum_{\ell=0}^{n(w)}\frac{(-1)^{\ell}}{k_1+k_2-\ell}\binom{n(w)-1}{\ell}\Bigg)    \no \\
& \quad +(k_1+k_2-n(w)) \Bigg(\sum_{\ell=0}^{n(w)}\frac{(-1)^{\ell}}{k_1+k_2-\ell}\binom{n(w)}{\ell}\Bigg)   \no \\
&=\sum_{\ell=0}^{n(w)}\frac{(-1)^{\ell}}{k_1+k_2-\ell} \Bigg(n(w)\binom{n(w)-1}{\ell}    \no \\ 
& \quad +(k_1+k_2-n(w))\binom{n(w)}{\ell}\Bigg).\end{align} 
Clearly,
\begin{equation} 
n(w)\binom{n(w)-1}{\ell}+(k_1+k_2-n(w))\binom{n(w)}{\ell}=(k_1+k_2-\ell)\binom{n(w)}{\ell}, 
\end{equation} 
and thus 
\begin{equation} 
\sum_{m=1}^{L(w)}\widehat{z_{k_1,k_2}}\bigl(\sigma^m(w)\bigr)=\sum_{\ell=0}^{n(w)}(-1)^{\ell}\binom{n(w)}{\ell}=0.  
\end{equation} 
Hence, Lemma~\ref{lD.2} completes the proof. 
\end{proof}

Next, we introduce some further notation. Let $k \in \bbN$ and set
\begin{align} 
\begin{split} 
x_1 &= 0, \\
x_k &=\sum_{j=1}^{k-1} j^{-1} 
\sum_{\substack{\cA \subseteq \{1,\cdots,j\}\\ j+|\cA|\geq k}} 
(-1)^{|\cA|}y_{\cA}, \quad k \geq 2,   \lb{D.33} 
\end{split} \\
\begin{split} 
y_1 &= 0, \\
y_k&=\sum_{j=1}^{k-1} j^{-1} \sum_{\substack{\cA \subseteq \{1,\cdots,j\}\\ j+|\cA|\leq k-1}} 
(-1)^{|\cA|}y_{\cA},   \quad k \geq 2, 
\end{split} \\
y_{\cA}&=\prod_{m=1}^jy_{m,\cA},\quad y_{m,\cA}=
\begin{cases}
A+B,& m \notin \cA, \\
AB,& m \in \cA. 
\end{cases}
\end{align} 
In particular, 
\begin{equation} 
\sum_{j=1}^{k-1} j^{-1} (A+B-AB)^j = x_k + y_k,    \lb{D.binom} 
\end{equation} 
and one notes that the length of the word $y_{\cA}$ subject to $\cA \subseteq \{1,\dots,j\}$, equals 
\begin{equation}
L(y_{\cA}) = \big|\cA^c\big| + 2 |\cA| = j +|\cA|, \quad 1 \leq j \leq k-1, \; k \geq 2    \lb{D.number} 
\end{equation}
(with $A^c = \{1,\dots,j\} \backslash \cA$ the complement of $\cA$ in $\{1,\dots,j\}$). 

Using this notation we can now state the following fact:

\begin{lemma} \lb{lD.5} Let $k \in \bbN$, $k\geq 2$, then 
\begin{equation} 
y_k\in \bigg(\sum_{j=1}^{k-1}\frac1j(A^j+B^j)+[{\rm Pol_2},{\rm Pol_2}]\bigg).
\end{equation} 
\end{lemma}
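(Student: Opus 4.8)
The plan is to express $y_k$ \emph{exactly} as a sum of the elements $z_{k_1,k_2}$ introduced before Lemma~\ref{lD.3}, and then to invoke Lemma~\ref{lD.4}, which already places every $z_{k_1,k_2}$ with $k_1,k_2\in\bbN$ into the commutator subspace $[{\rm Pol_2},{\rm Pol_2}]$. Once this identity is in hand, the only contributions surviving modulo $[{\rm Pol_2},{\rm Pol_2}]$ are the ``diagonal'' terms $z_{k_1,0}=k_1^{-1}A^{k_1}$ and $z_{0,k_2}=k_2^{-1}B^{k_2}$, whose sum is precisely $\sum_{j=1}^{k-1}j^{-1}(A^j+B^j)$, which is the assertion.

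First I would expand each word $y_{\cA}$ appearing in the definition of $y_k$ by splitting every factor $A+B$ (at the positions $m\notin\cA$) into its two summands. Writing $\pi_3=\cA$ for the positions carrying $AB$ and letting the complement $\{1,\dots,j\}\setminus\cA$ be split into the positions $\pi_1$ carrying $A$ and $\pi_2$ carrying $B$, this yields $y_{\cA}=\sum_{\pi:\,\pi_3=\cA}z_{\pi}$, the sum running over all three-part partitions $\pi$ of $\{1,\dots,j\}$ with $\pi_3=\cA$. Each resulting word $z_\pi$ has exactly $k_1=|\pi_1|+|\pi_3|$ letters $A$ and $k_2=|\pi_2|+|\pi_3|$ letters $B$, and its total length is $k_1+k_2=j+|\pi_3|=j+|\cA|=L(y_{\cA})$, in accordance with \eqref{D.number}. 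Substituting this into the definition of $y_k$ and letting $\pi$ range over all three-part partitions gives
\begin{equation*}
y_k=\sum_{j=1}^{k-1}j^{-1}\sum_{\substack{\pi\in S_j,\,|\pi|=3\\ j+|\pi_3|\leq k-1}}(-1)^{|\pi_3|}z_\pi.
\end{equation*}

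The decisive step is to regroup this double sum according to the letter counts $(k_1,k_2)$. The cutoff $j+|\pi_3|\leq k-1$ is nothing but $k_1+k_2\leq k-1$, a condition on the total length alone; crucially, it is insensitive to the individual value of $j$ (equivalently of $|\pi_3|$) over which the definition \eqref{D.z} of $z_{k_1,k_2}$ sums. Consequently, for every pair $(k_1,k_2)$ with $1\leq k_1+k_2\leq k-1$ the terms collected in $y_k$ reassemble the \emph{complete} element $z_{k_1,k_2}$: all admissible $j$ automatically satisfy $j\leq k_1+k_2\leq k-1$ and hence lie in the summation range of $y_k$, while the weights $j^{-1}(-1)^{|\pi_3|}$ match \eqref{D.z} verbatim (the degenerate cases $k_2=0$ and $k_1=0$ force $\pi_3=\emptyset$ and produce exactly $z_{k_1,0}=k_1^{-1}A^{k_1}$ and $z_{0,k_2}=k_2^{-1}B^{k_2}$). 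This establishes $y_k=\sum_{1\leq k_1+k_2\leq k-1}z_{k_1,k_2}$. Applying Lemma~\ref{lD.4} to discard each $z_{k_1,k_2}$ with $k_1,k_2\in\bbN$ modulo $[{\rm Pol_2},{\rm Pol_2}]$ leaves only the diagonal terms $\sum_{k_1=1}^{k-1}k_1^{-1}A^{k_1}+\sum_{k_2=1}^{k-1}k_2^{-1}B^{k_2}$, as claimed.

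I expect the main obstacle to be the combinatorial bookkeeping in this last regrouping: one must verify carefully that the length cutoff defining $y_k$ slices the full expansion exactly along complete $z_{k_1,k_2}$ blocks, neither truncating the internal $j$-summation of any $z_{k_1,k_2}$ with $k_1+k_2\leq k-1$ nor admitting spurious partial contributions. The clean resolution rests on the single observation that the total length $k_1+k_2=j+|\pi_3|$ is invariant under the internal summation index of \eqref{D.z}, so that the length cutoff and the block structure of the $z_{k_1,k_2}$ are compatible.
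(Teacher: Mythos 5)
Your proposal is correct and follows essentially the same route as the paper: the paper's proof simply asserts the identity $y_k=\sum_{k_1,k_2\ge 0,\,k_1+k_2\le k-1}z_{k_1,k_2}$ and then invokes Lemma~\ref{lD.4} together with the explicit forms of $z_{k_1,0}$ and $z_{0,k_2}$, exactly as you do. The only difference is that you spell out the combinatorial verification of that identity (expanding each $y_{\cA}$ into the $z_\pi$ and checking that the cutoff $j+|\cA|\le k-1$ corresponds to $k_1+k_2\le k-1$ and slices along complete $z_{k_1,k_2}$ blocks), which the paper leaves implicit.
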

\begin{proof} Employing 
\begin{equation} 
y_k=\sum_{\substack{k_1,k_2\geq0\\ k_1+k_2 \leq k-1}}z_{k_1,k_2},    \lb{D.a}
\end{equation} 
Lemma \ref{lD.4} yields 
\begin{equation} 
z_{k_1,k_2}\in [{\rm Pol_2},{\rm Pol_2}],\quad k_1,k_2 \in \bbN.   \lb{D.b}
\end{equation} 
Since by \eqref{D.z}, 
\begin{equation} 
z_{0,0}=0, \quad 
z_{k_1,0}= k_1^{-1} A^{k_1},\; k_1\in \bbN,\quad z_{0,k_2}= k_2^{-1} B^{k_2},\; k_2 \in \bbN,    \lb{D.c}
\end{equation} 
combining \eqref{D.a}--\eqref{D.c} completes the proof.
\end{proof}

\section{The Product Formula for $k$th Modified Fredholm Determinants} \lb{s3}

After these preparations we are ready to return to the product formula for regularized determinants and  specialize the preceding algebraic considerations to the context of
Theorem \ref{tD.1}. 

First we recall that by \eqref{D.33} and \eqref{D.number},  
\begin{equation}
x_k =\sum_{j=1}^{k-1} j^{-1} \sum_{\substack{\cA\subseteq \{1,\cdots,j\}\\ j+|\cA|\geq k}}(-1)^{|\cA|}y_{\cA} := X_k(A,B) \in \cB_1(\cH),  
\quad k \geq 2,
\end{equation} 
since for $1 \leq j \leq k-1$, $L(y_{\cA}) = j+|\cA| \geq k$, and hence one obtains the inequality 
\begin{equation} 
\|x_k\|_{\cB_1(\cH)}\leq c_k\max_{\substack{0\leq k_1,k_2<k\\ k_1+k_2\geq k}} 
\|A\|_{\cB_k(\cH)}^{k_1}\|B\|_{\cB_k(\cH)}^{k_2}, \quad k \in \bbN, \; k \geq 2, 
\end{equation} 
for some $c_k > 0$, $k \geq 2$. We also set (cf.\ \eqref{D.33}) $X_1(A,B) = 0$. 

\begin{theorem} Let $k \in \bbN$ and assume that $A,B\in \cB_k(\cH)$. Then 
\begin{equation} 
{\det}_{\cH,k}((I_{\cH}-A)(I_{\cH}-B))={\det}_{\cH,k}(I_{\cH}-A) {\det}_{\cH,k}(I_{\cH}-B) \exp({\tr}_{\cH}(X_k(A,B))).   \lb{D.38A} 
\end{equation} 
\end{theorem}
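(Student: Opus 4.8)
The plan is to pass to logarithms, reduce the claimed identity to a single trace identity, and then prove that trace identity by combining the analytic structure of ${\det}_{\cH,k}$ for operators of small norm with the purely algebraic Lemmas~\ref{lD.2}--\ref{lD.5}. First I would reduce to the case of small norm by a scaling/analyticity argument. Fix $A,B\in\cB_k(\cH)$ and introduce a complex parameter $z$, replacing $A,B$ by $zA,zB$. Both sides of \eqref{D.38A} then become entire functions of $z$: the maps $z\mapsto (I_{\cH}-zA)(I_{\cH}-zB)=I_{\cH}-\big(z(A+B)-z^2AB\big)$ and $z\mapsto I_{\cH}-zA$, $z\mapsto I_{\cH}-zB$ are entire with values in $\cB_k(\cH)$, on which ${\det}_{\cH,k}(I_{\cH}-\,\cdot\,)$ is entire, while $z\mapsto\tr_{\cH}(X_k(zA,zB))$ is a polynomial in $z$ since $X_k$ is a finite sum of monomials of degree $\geq k$ in $A,B$, hence $\cB_1(\cH)$-valued. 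By the identity theorem it suffices to establish \eqref{D.38A} (with $A,B$ replaced by $zA,zB$) for all sufficiently small $|z|$ and then set $z=1$; equivalently, I may assume $\|A\|,\|B\|$ so small that $\|A\|,\|B\|,\|A+B-AB\|<1$.

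For such operators I would use the eigenvalue product \eqref{1.3} together with Lidskii's theorem to obtain, for $M\in\cB_k(\cH)$ with $\|M\|<1$, the convergent expansion $\log{\det}_{\cH,k}(I_{\cH}-M)=-\sum_{\ell\geq k}\ell^{-1}\tr_{\cH}(M^\ell)$, where every term with $\ell\geq k$ is the trace of a trace class operator. Writing $C=A+B-AB$, so that $I_{\cH}-C=(I_{\cH}-A)(I_{\cH}-B)$, taking logarithms in \eqref{D.38A} reduces the theorem to the single trace identity
\[
\sum_{\ell\geq k}\ell^{-1}\big[\tr_{\cH}(A^\ell)+\tr_{\cH}(B^\ell)-\tr_{\cH}(C^\ell)\big]=\tr_{\cH}(X_k(A,B))=\tr_{\cH}(x_k).
\]
I would prove this by introducing the operator $\Phi:=\sum_{\ell\geq k}\ell^{-1}(A^\ell+B^\ell-C^\ell)$ and showing that $\Phi\equiv x_k$ modulo $[{\rm Pol_2},{\rm Pol_2}]$. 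To see this, work in the abelianization ${\rm Pol_2}/[{\rm Pol_2},{\rm Pol_2}]$, which is a commutative (formal power series) ring; there the identity $\log(I_{\cH}-C)=\log(I_{\cH}-A)+\log(I_{\cH}-B)$ holds trivially because $I_{\cH}-C=(I_{\cH}-A)(I_{\cH}-B)$, i.e.\ $\sum_{\ell\geq1}\ell^{-1}(A^\ell+B^\ell-C^\ell)\equiv0$ modulo commutators. Splitting the sum at $\ell=k$, using \eqref{D.binom} to write $\sum_{\ell=1}^{k-1}\ell^{-1}C^\ell=x_k+y_k$, and invoking Lemma~\ref{lD.5} (which gives $y_k\equiv\sum_{j=1}^{k-1}j^{-1}(A^j+B^j)$ modulo commutators) makes the degrees below $k$ cancel and leaves exactly $x_k\equiv\Phi$ modulo $[{\rm Pol_2},{\rm Pol_2}]$.

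Finally I would take the trace. Both $\Phi$ and $x_k$ are supported on words of length $\geq k$ (for $x_k$ this is \eqref{D.number}), hence so is $\Phi-x_k$. Every word of length $\ell\geq k$ evaluated on $A,B\in\cB_k(\cH)$ is a product of at least $k$ factors from $\cB_k(\cH)$, so it lies in $\cB_1(\cH)$ by the Hölder inequality for trace ideals, and on such words the trace is cyclic. Decomposing $\Phi-x_k$ into its homogeneous components, each of degree $d\geq k$ and, by the grading of the commutator subspace, again a sum of commutators, Lemma~\ref{lD.2} shows that the coefficients within each cyclic orbit sum to zero; cyclicity of the trace then forces $\tr_{\cH}\big((\Phi-x_k)_d\big)=0$ for every $d\geq k$, and summing the norm-convergent series gives $\tr_{\cH}(\Phi)=\tr_{\cH}(x_k)$, which is the required identity.

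The main obstacle is exactly the difficulty the algebraic lemmas are built to overcome: because $A,B$ do not commute and lie only in $\cB_k(\cH)$, neither the individual logarithms $\log(I_{\cH}-A),\log(I_{\cH}-B)$ nor the regularizing polynomials $\sum_{\ell=1}^{k-1}\ell^{-1}A^\ell$ are separately traceable, so one cannot merely invoke additivity of $\tr_{\cH}\log$. The real content is to package the entire non-commutative and regularization discrepancy into a \emph{single} trace class, degree-$\geq k$ element of the commutator subspace, on which cyclicity of the trace is legitimate; the passage through the commutative quotient and Lemma~\ref{lD.5} is precisely what accomplishes this bookkeeping.
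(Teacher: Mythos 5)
Your overall architecture is sound and in places genuinely different from the paper's: you reduce to operators of small norm by scaling and analytic continuation in $z$ and then work with the convergent series $\log{\det}_{\cH,k}(I_{\cH}-M)=-\sum_{\ell\geq k}\ell^{-1}\tr_{\cH}(M^{\ell})$, whereas the paper reduces to $A,B\in\cB_1(\cH)$, uses the exact identity ${\det}_{\cH,k}(I_{\cH}-T)={\det}_{\cH}(I_{\cH}-T)\exp\big(\tr_{\cH}\big(\sum_{j=1}^{k-1}j^{-1}T^j\big)\big)$ together with the trace-class product formula \eqref{1.2}, and then extends by continuity in the $\cB_k(\cH)$-norm and density of $\cB_1(\cH)$ in $\cB_k(\cH)$. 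Both reductions are legitimate, and your concluding H\"older/cyclicity argument for killing the trace of degree-$\geq k$ commutators is fine. However, there is a genuine error at your central algebraic step.

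You assert that $\sum_{\ell\geq1}\ell^{-1}(A^{\ell}+B^{\ell}-C^{\ell})\equiv0$ modulo $[{\rm Pol_2},{\rm Pol_2}]$ because the corresponding identity ``holds trivially in the abelianization ${\rm Pol_2}/[{\rm Pol_2},{\rm Pol_2}]$, which is a commutative ring.'' This is not a valid argument. The commutator \emph{subspace} $[{\rm Pol_2},{\rm Pol_2}]$ is only a linear subspace, not a two-sided ideal, so ${\rm Pol_2}/[{\rm Pol_2},{\rm Pol_2}]$ is not a ring; its degree-$n$ component is spanned by cyclic equivalence classes of words and has dimension of order $2^n/n$, far exceeding the $n+1$ commutative monomials of degree $n$. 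The genuine abelianization is the quotient by the two-sided ideal generated by commutators, and its kernel is strictly larger than $[{\rm Pol_2},{\rm Pol_2}]$: for instance, $AABB-ABAB=A[A,B]B$ vanishes in the commutative quotient but does not lie in $[{\rm Pol_2},{\rm Pol_2}]$, since the coefficients over the cyclic orbit of $AABB$ sum to $1\neq0$. Hence ``equal after letting $A$ and $B$ commute'' does not imply ``congruent modulo commutators,'' and the congruence you need is precisely the nontrivial content that Lemmas \ref{lD.3} and \ref{lD.4} exist to supply: the bidegree-$(k_1,k_2)$ component of $\sum_{\ell\geq1}\ell^{-1}C^{\ell}$ is exactly $z_{k_1,k_2}$ from \eqref{D.z}, and Lemma \ref{lD.4} shows $z_{k_1,k_2}\in[{\rm Pol_2},{\rm Pol_2}]$ for $k_1,k_2\geq1$ via a delicate binomial identity. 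Your proof can be repaired by citing Lemma \ref{lD.4} at this point instead of the abelianization (the pure-$A$ and pure-$B$ components cancel against $\sum_{\ell}\ell^{-1}(A^{\ell}+B^{\ell})$ by \eqref{D.c}); with that substitution the rest of your argument goes through.
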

\begin{proof} First, we suppose that $A,B\in\cB_1(\cH)$. Then it is well-known that
\begin{equation} 
{\det}_{\cH,1}(I_{\cH}-A) {\det}_{\cH,1}(I_{\cH}-B)={\det}_{\cH,1}((I_{\cH}-A)(I_{\cH}-B)),
\end{equation} 
consistent with $X_1(A,B) = 0$. Without loss of generality we may assume that $k \in \bbN$, $k \geq 2$, in 
the following. Employing 
\begin{equation} 
{\det}_{\cH,k}(I_{\cH}-T)={\det}_{\cH}(I_{\cH}-T) \exp\bigg({\tr}_{\cH}\bigg(\sum_{j=1}^{k-1} j^{-1} T^j\bigg)\bigg), 
\quad T \in \cB_1(\cH), 
\end{equation} 
see, for instance, \cite[Lemma~XI.9.22\,(e)]{DS88}, \cite[Theorem~9.2\,(d)]{Si05}, one infers that 
\begin{align} 
&{\det}_{\cH,k}((I_{\cH}-A)(I_{\cH}-B))={\det}_{\cH,k}(I_{\cH}-(A+B-AB))     \no \\
& \quad ={\det}_{\cH}(I_{\cH}-(A+B-AB)) 
\exp\bigg({\tr}_{\cH}\bigg(\sum_{j=1}^{k-1} j^{-1} (A+B-AB)^j\bigg)\bigg)  \no \\
& \quad ={\det}_{\cH}(I_{\cH}-A) {\det}_{\cH}(I_{\cH}-B) 
\exp\bigg({\tr}_{\cH}\bigg(\sum_{j=1}^{k-1} j^{-1} (A+B-AB)^j\bigg)\bigg)    \no \\
& \quad ={\det}_{\cH,k}(I_{\cH}-A) {\det}_{\cH,k}(I_{\cH}-B)   \no \\
& \qquad \times \exp\bigg({\tr}_{\cH}\bigg(\sum_{j=1}^{k-1} j^{-1} \big[(A+B-AB)^j-A^j-B^j\big]\bigg)\bigg).
\end{align} 

By \eqref{D.binom} one concludes that 
\begin{equation} 
{\tr}_{\cH}\bigg(\sum_{j=1}^{k-1} j^{-1} \big[(A+B-AB)^j-A^j-B^j\big]\bigg) 
= {\tr}_{\cH}(x_k) + {\tr}_{\cH}\bigg(y_k-\sum_{j=1}^{k-1} j^{-1} \big(A^j+B^j\big)\bigg).
\end{equation} 
By Lemma \ref{lD.5}, 
\begin{equation} 
y_k-\sum_{j=1}^{k-1} j^{-1} \big(A^j+B^j\big)
\end{equation} 
is a sum of commutators of polynomial expressions in $A$ and $B$. Hence, 
\begin{equation} 
\bigg(y_k-\sum_{j=1}^{k-1} j^{-1} \big(A^j+B^j\big)\bigg) \subset [\cB_1(\cH),\cB_1(\cH)], 
\end{equation} 
and thus, 
\begin{equation} 
{\tr}_{\cH} \bigg(y_k-\sum_{j=1}^{k-1} j^{-1} \big(A^j+B^j\big)\bigg)=0, 
\end{equation} 
proving assertion \eqref{D.38A} for $A,B\in\cB_1(\cH)$.

Since both, the right and left-hand sides in \eqref{D.38A} are continuous with respect to the norm in 
$\cB_k(\cH)$, and $\cB_1(\cH)$ is dense in $\cB_k(\cH)$, \eqref{D.38A} holds for arbitrary 
$A, B \in \cB_k(\cH)$.
\end{proof}


\noindent
{\bf Acknowledgments.} We are indebted to Galina Levitina for very helpful remarks on 
a first draft of this paper and to Rupert Frank for kindly pointing out references \cite{Fr17} and 
\cite{Ha05} to us. We are particularly indebted to the anonymous referee for a very careful reading of our 
manuscript and for making excellent suggestions for improvements.
 
 
\end{document}